\newcommand{\su}{{\mathfrak s  \mathfrak u}}
\newcommand{\g}{{\mathfrak g}}         % Lie algebra of G
\newcommand{\cx}{{\mathbb C}}
\newcommand{\diag}{\operatorname{diag}}
\newcommand{\tr}{\operatorname{tr}}
\newcommand{\Res}{\operatorname{Res}}
\newcommand{\supp}{\operatorname{supp}}
\numberwithin{equation}{section}
\newtheorem{theorem}{Theorem}[section]
\newtheorem{lemma}[theorem]{Lemma}
\newtheorem{proposition}[theorem]{Proposition}
\theoremstyle{remark}
\newtheorem{remark}[theorem]{Remark}
\newcommand{\oP}{{\mathbb{P}}}
\newcommand{\oR}{{\mathbb{R}}}
\newcommand{\oZ}{{\mathbb{Z}}}
\newcommand{\sA}{{\mathcal{A}}}   % connections
\newcommand{\sG}{{\mathcal{G}}}   % gauge transformations
\newcommand{\sO}{{\mathcal{O}}}
\newcommand{\fG}{{\mathfrak{g}}}
\newcommand{\fH}{{\mathfrak{h}}}
\newcommand{\fK}{{\mathfrak{k}}}
\newcommand{\fL}{{\mathfrak{l}}}
\newcommand{\fO}{{\mathfrak{o}}}
\newcommand{\fP}{{\mathfrak{p}}}
\newcommand{\fS}{{\mathfrak{s}}}
\newcommand{\fU}{{\mathfrak{u}}}
\begin{document}

\title{A hyperk\"ahler submanifold of the monopole moduli space}
\author{Roger Bielawski}
\address{Institut f\"ur Differentialgeometrie\\
Universit\"at Hannover\\ Welfengarten 1\\ D-30167 Hannover}
%\email{R.Bielawski@ed.ac.uk}
%\date{\today}

%\thanks{}
%\dedicatory{}

\begin{abstract} We discuss a $4[k/2]$-dimensional complete hyperk\"ahler submanifold of the $(4k-4)$-dimensional moduli space of strongly centred $SU(2)$-monopoles of charge $k$.
\end{abstract}

\maketitle

\thispagestyle{empty}

The isometry group of $\oR^3$ acts isometrically on the moduli space $M_k$ of Euclidean $SU(2)$-monopoles of charge $k$ and, consequently, a fixed point set of any subgroup of this group is a totally geodesic submanifold. Similar statement holds for any subgroup of the orthogonal group $O(3)$ acting on the submanifold $M_k^0$ consisting of {\em strongly centred} monopoles \cite{HMM} (these are monopoles with the centre at the origin and total phase equal to $1$). Houghton and Sutcliffe \cite{HS} have shown that the submanifold of $M^0_3$ consisting of monopoles symmetric about the origin is isometric to the Atiyah-Hitchin manifold (i.e. the moduli space of centred monopoles of charge $2$). Surprisingly, monopoles of higher charges invariant under the reflection $x\mapsto -x$ seem not to have been considered in the literature. This reflection is particularly interesting since it extends to a reflection $\tau:(x,t)\mapsto (-x,t^{-1})$ on $\oR^3\times S^1$ which preserves the hyperk\"ahler structure of $\oR^3\times S^1$. It is then easy to deduce that the submanifold $N_k$ of strongly centred monopoles symmetric about the origin is a (complete) hyperk\"ahler submanifold of $M_k^0$ for any charge $k$. We have stumbled upon this hyperk\"ahler manifold (for even $k$) in a completely different context in \cite{slices} and realised only {\em a posteriori}, by identifying the twistor space, that it must be a submanifold of $M_k^0$.
\par
In the present paper we describe the submanifold $N_k$ in terms of Nahm's equations. Since $N_k$ is $SO(3)$-invariant, all of its complex structures are equivalent and can be identified with a complex submanifold of based rational maps of degree $k$. This is straightforward, given that the involution $\tau$ acts on rational maps via $p(z)/q(z)\mapsto \tilde{p}(z)/q(-z)$, where $\tilde p(-z)p(z)-1=0 \mod q(z)$, but we also show this directly using Nahm's equations. We then show that $N_k$ is biholomorphic to the {\em transverse Hilbert scheme} of $n$ points \cite[\S 5]{slices} on the $D_1$-surface if $k=2n$, and on the $D_0$-surface if $k=2n+1$. This allows us to conclude that $N_{2n}$ is simply connected, while $N_{2n+1}$ has fundamental group of order $2$. In Section \ref{2} we present an alternative construction, which also gives a description of hyperk\"ahler deformations of $N_{2n}$.

\section{Description in terms of Nahm's equations}

The moduli space $M_k^0$ of strongly centred $SU(2)$-monopoles of charge $k$ is isomorphic to the moduli space of $\su(k)$-valued solutions to Nahm's equations on $(0,2)$ with $T_1(t),T_2(t),T_3(t)$ having simple poles at $t=0,2$, the residues of which define the standard $k$-dimensional irreducible representation of $\su(2)$, i.e.
$$\Res T_1(t)=i\diag(k-1,k-3,\dots,3-k,1-k),$$
$$\Res (T_2+iT_3)(t)_{ij}=\begin{cases}\sqrt{j(k-j)} & \text{if $i=j+1$}\\ 0 & \text{otherwise}.\end{cases}$$
The corresponding representation of $\fS\fL_2(\cx)$ is given by the action $y\partial_x,x\partial_y,x\partial_x-y\partial_y$ on binary forms of degree $k-1$ with a basis
\begin{equation}v_i=\begin{pmatrix} k-1\\i-1\end{pmatrix}^{1/2}x^{k-i}y^{i-1},\enskip i=1,\dots,k.\label{basis}\end{equation}
Write $V$ for the standard $2$-dimensional $\fS\fL_2(\cx)$-module, so that the symmetric product $S^{k-1}V$ is the standard $k$-dimensional irreducible representation of $\fS\fL_2(\cx)$.
The equivariant isomorphisms
$$ \Lambda^2\bigl(S^{2n+1}V^\ast\bigr)\simeq \bigoplus_{i=0}^{n} S^{4i}V^\ast,\quad S^2 \bigl(S^{2n}V^\ast\bigr)\simeq\bigoplus_{i=0}^{n} S^{4i}V^\ast,$$
imply that, if $k$ is even (resp. if $k$ is odd), then there exists a unique (up to scaling) $\fS\fL_2(\cx)$-invariant skew-symmetric (resp. symmetric) bilinear form on $S^{k-1}V$.
This invariant form is a classical object and is called {\em transvectant}. In 
the basis \eqref{basis} the transvectant $T(f,g)$ of $f=\sum_{i=1}^k a_iv_i$, $g=\sum_{i=1}^k b_iv_i$ is given \cite[Ex.2.7]{SJ}
$$T(f,g)=\sum_{i=0}^{k-1} (-1)^{i}a_{i+1}b_{k-i}.$$
We conclude that the residues of $T_1,T_2,T_3$  belong to a symplectic subalgebra of $\su(k)$ if $k$ is even, and to an orthogonal subalgebra if $k$ is odd. These subalgebras are defined as
\begin{equation} \left\{ A\in \su(k);\; AJ+JA^T=0\right\},\label{subgroup}\end{equation}
where $J$ is an antidiagonal matrix  with $J_{i,k+1-i}=(-1)^{i-1}$. In other words, they are the fixed point sets of the involution 
\begin{equation} \sigma(A)= -JA^TJ^{-1}. \label{tau}\end{equation}
We denote by $\su(k)^\sigma$ the subalgebra \eqref{subgroup} and by  $SU(k)^\sigma$ the corresponding subgroup of $SU(k)$ ($SU(k)^\sigma\simeq Sp(n)$ if $k=2n$ and $SU(k)^\sigma\simeq SO(2n+1)$ if $k=2n+1$). 
\par
We consider the space $\sA^\sigma$  of $\su(k)^\sigma$-valued solutions to Nahm's equations on $[0,2]$. Let $\sG$ denote the group of $SU(k)$-valued gauge transformations which are identity at $t=0,2$, and let $\sG^\sigma$ be its subgroup of $SU(k)^\sigma$-valued gauge transformations. It is easy to verify that two  $\sG$-equivalent elements of $\sA^\sigma$ are also $\sG^\sigma$-equivalent. Thus the natural map $\sA^\sigma/\sG^\sigma\to M_k^0$ is an embedding and we view $N_k=\sA^\sigma/\sG^\sigma$ as a submanifold of  $M_k^0$. $N_k$ is 
the fixed point set of an involution $\sigma$ which sends each $T_i(t)$ to $\sigma(T_i(t))$ (and acts the same way on gauge transformations) and therefore a complete hyperk\"ahler submanifold of $M_k^0$.

\begin{proposition} With respect to any complex structure $N_k$ is biholomorphic to the space of based rational maps $\frac{p(z)}{q(z)}$ of degree $k$ such that
\begin{itemize}
\item[(i)] if $k=2n$, then $q(z)=\tilde q(z^2)$ for a monic polynomial $\tilde q$ of degree $n$ and $p(z)p(-z)\equiv 1\mod q(z)$;
\item[(ii)] if $k=2n+1$, then $q(z)=z\tilde q(z^2)$ for a monic polynomial $\tilde q$ of degree $n$, $p(0)=1$ and $p(z)p(-z)\equiv 1\mod q(z)$. 
\end{itemize}
In particular $\dim_{\oR} N_{2n}=\dim_{\oR} N_{2n+1}=4n$.
\label{hol}\end{proposition}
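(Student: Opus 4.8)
The plan is to transport the statement to Donaldson's rational-map description and to read off the fixed locus of the induced involution. Since $N_k$ is $SO(3)$-invariant, all of its complex structures are equivalent, so it is enough to treat one of them, say $I$. For that complex structure, Donaldson's theorem --- made explicit in terms of Nahm data by Hurtubise --- identifies $M_k^0$ biholomorphically with the space $\Rat_k^0$ of based rational maps $p(z)/q(z)$ of degree $k$ (that is, $q$ monic of degree $k$, $\deg p<k$, $\gcd(p,q)=1$) cut out by the two strongly-centred conditions: the subleading coefficient of $q$ vanishes (centre at the origin) and $\Res(q,p)=\prod_{q(\beta)=0}p(\beta)=1$ (total phase one). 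As $\sigma$ preserves the hyperk\"ahler structure and hence each complex structure, it becomes under this identification a biholomorphic involution $\wh\sigma$ of $\Rat_k^0$ whose fixed locus is $N_k$. So the problem splits into (a) computing $\wh\sigma$ and (b) describing its fixed points.

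For (a) I would trace $A\mapsto\sigma(A)=-JA^TJ^{-1}$ through Hurtubise's construction of $(p,q)$ from Nahm data $T_1,T_2,T_3$ on $[0,2]$, in which $q(z)$ is, up to normalization, a characteristic polynomial built from the complexified Nahm matrices and the flow, while the class of $p$ modulo $q$ is the scattering datum obtained by comparing the solution of the associated linear ODE at the two ends $t=0$ and $t=2$. The expected outcome, consistent with the formula $p/q\mapsto\tilde p/q(-z)$ of the introduction, is that $\wh\sigma$ acts by
\[
(q(z),p(z))\ \longmapsto\ \bigl((-1)^k q(-z),\ \tilde p(z)\bigr),\qquad \tilde p(-z)\,p(z)\equiv 1\pmod{q(z)},\ \ \deg\tilde p<k,
\]
where $\tilde p$ is the unique such polynomial. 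There are two points to verify: conjugation by the antidiagonal matrix $J$ reverses the weight multiplet carried by the residues and thereby reflects the spectral variable used to define $(p,q)$, i.e.\ $z\mapsto -z$ (the factor $(-1)^k$ just restores monicity of the denominator); and the transpose in $\sigma$ turns the defining ODE into a copy of its adjoint, which replaces the transition datum $p\bmod q$ by its inverse in $\cx[z]/(q(z))$. I expect this to be the main obstacle --- it is essentially a bookkeeping matter, but one must fix Hurtubise's conventions (which combination $T_2\pm iT_3$, which endpoint, the exact form of $J$ and of the basis \eqref{basis}) carefully enough to be certain the answer is $z\mapsto -z$ together with the reciprocity relation, rather than $z\mapsto z$, $z\mapsto 1/z$, or some other normalization of $\tilde p$.

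Granting the formula for $\wh\sigma$, part (b) is routine. Fixedness of the denominator says $(-1)^k q(-z)=q(z)$: for $k=2n$ this means $q$ is even, $q(z)=\tilde q(z^2)$ with $\tilde q$ monic of degree $n$, and for $k=2n+1$ it means $q$ is odd, $q(z)=z\tilde q(z^2)$; in both cases the subleading coefficient of $q$ vanishes automatically, so that strongly-centred condition imposes nothing new. Fixedness of the numerator, i.e.\ $\tilde p=p$, says $p(z)p(-z)\equiv 1\pmod{q(z)}$, which in particular forces $\gcd(p,q)=1$. It remains to impose $\Res(q,p)=\prod_{q(\beta)=0}p(\beta)=1$: evaluating the congruence at a simple root $\gamma$ of $q$ gives $p(\gamma)p(-\gamma)=1$, so the nonzero roots of $q$ pair up as $\pm\gamma_1,\dots,\pm\gamma_n$ with $\prod_i p(\gamma_i)p(-\gamma_i)=1$; hence when $k=2n$ one gets $\Res(q,p)=1$ for free, while when $k=2n+1$ the extra root $0$ contributes the factor $p(0)$, so $\Res(q,p)=p(0)$ and the constraint becomes precisely $p(0)=1$. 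This is exactly (i), resp.\ (ii). Finally, for the dimension I would restrict to the open dense set where $\tilde q$ has $n$ distinct nonzero roots $\gamma_1^2,\dots,\gamma_n^2$: by the Chinese remainder theorem such a $p$ --- of degree $<k$, with $p(0)=1$ when $k$ is odd --- is uniquely determined by the values $p(\pm\gamma_i)$, subject only to $p(-\gamma_i)=p(\gamma_i)^{-1}$ with $p(\gamma_i)\in\cx^\ast$ arbitrary, so the fixed locus is locally $\{\tilde q\}\times(\cx^\ast)^n$, of real dimension $2n+2n=4n$ in both cases. Hence $N_k=\operatorname{Fix}(\wh\sigma)$ is biholomorphic, in the complex structure $I$ and therefore (by $SO(3)$-invariance) in every complex structure, to the space described by (i), resp.\ (ii), and $\dim_{\oR}N_{2n}=\dim_{\oR}N_{2n+1}=4n$.
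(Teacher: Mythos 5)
Your endgame (part (b)) is correct and matches what the paper needs: fixedness of the denominator forces $q(-z)=(-1)^kq(z)$, hence $q(z)=\tilde q(z^2)$ or $z\tilde q(z^2)$; fixedness of the numerator gives $p(z)p(-z)\equiv 1\bmod q(z)$; the residue condition reduces to $p(0)=1$ in the odd case and is automatic in the even case; and the Chinese-remainder dimension count giving $4n$ is fine. The problem is that the argument is conditional on the formula for the induced involution $\wh\sigma$ on $\Rat_k$, and this formula is exactly the step you do not supply: you state it as an ``expected outcome'' and explicitly flag that, depending on conventions, the answer might instead be $z\mapsto z$, $z\mapsto 1/z$, or a different normalisation of $\tilde p$. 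Since that formula is the entire nontrivial content of the proposition (the introduction of the paper quotes it without proof precisely because Section 1 is meant to establish it), the proposal as written has a genuine gap at its core rather than a routine verification left to the reader.

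The paper's own proof shows how to close, or rather avoid, this gap: it never computes the involution on rational maps. Because $N_k$ is presented as $\sA^\sigma/\sG^\sigma$, i.e.\ by Nahm data taking values in the fixed subalgebra $\su(k)^\sigma$ itself (not merely fixed up to gauge), the Donaldson-type normal form --- gauge $\beta$ to the companion matrix $S$ and record the value $u$ of the complex gauge transformation at $t=2$, the rational map being $\tr u(z-S)^{-1}$ --- can be carried out entirely inside $\fS\fL(k,\cx)^\sigma$ and $SL(k,\cx)^\sigma$, i.e.\ inside $Sp(2n,\cx)$ or $SO(k,\cx)$. On the dense locus where $S$ is regular semisimple one conjugates $(S,u)$ into a Cartan pair $(\fH,H)$ of this subgroup, where $h_{ii}+h_{k+1-i,k+1-i}=0$ and the diagonal entries of $u$ satisfy $u_{ii}\,u_{k+1-i,k+1-i}=1$; the pairing of poles as $\pm w$ and the relation $p(w)p(-w)=1$ (together with $p(0)=1$ for odd $k$) then follow directly, and the general case is obtained by taking closures. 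If you want to keep your route via $\wh\sigma$, you must actually perform the bookkeeping you defer --- in particular verify that conjugation by $J$ reflects the spectral parameter and that the transpose inverts the scattering datum in $\cx[z]/(q(z))$ --- since a sign or normalisation error there would change the statement of the proposition.
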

\begin{remark} If the roots of $q(z)$ are distinct, then the above condition means that $p(w)p(-w)=1$ for each root $w$ of $q(z)$ (and $p(0)=1$ if $k$ is odd). The full $N_k$ is then the closure of this set inside the space of all rational maps.
\end{remark}
\begin{proof}
Recall \cite{Don} that $M_k$ is biholomorphic to the space of based (i.e. $f(\infty)=0$) rational maps of degree $k$, and  $M_k^0$ to the submanifold consisting of rational maps $p(z)/q(z)$ such that the sum of the poles $z_i$ is equal to $0$ and $\prod_{i=1}^k p(z_i)=1$. From the point of view of Nahm's equations, the rational map is obtained by applying a (singular) complex gauge transformation $g$ to the complex Nahm equation $\dot\beta=[\beta,\alpha]$ in order to make $\beta$ a constant matrix of the form
\begin{equation}S=\begin{pmatrix} 0 & \dots & \dots & 0  & s_n\\ 1 & \ddots & & 0 & s_{n-1}\\ \vdots & \ddots &\ddots & \vdots &\vdots\\ 0 &  \dots &\ddots & 0&  s_2\\  0 & \dots &\dots & 1 & s_1\end{pmatrix}.\label{S}\end{equation}
The value of the complex gauge transformation $g(t)$ at $t=2$ (modulo a fixed singular gauge transformation) is then an element $u$ of the centraliser of $S$ in $GL(k,\cx)$. The pair $(S,u)$ corresponds to the rational map $\tr u(z-S)^{-1}$ (this is the description given in \cite{JLMS}). The complex structures of $M_k^0$ and of $N_k$ are obtained by assuming that $S$ and $u$ belong to the appropriate subalgebra and subgroup (i.e. to $\fS\fL(k,\cx),SL(n,\cx)$ for $M_k^0$ and to $\fS\fL(k,\cx)^\sigma,SL(n,\cx)^\sigma$ for $N_k$). It is enough to consider the subset where the poles of the rational map, i.e. the eigenvalues of $S$, are distinct (since $N_k$ is the closure of this set in the space of all rational maps). A Cartan subalgebra $\fH$ of  $\fS\fL(k,\cx)^\sigma$ is given by the diagonal matrices $h$ satisfying $ h_{ii}+h_{k+1-i,k+1-i}=0$, $i=1,\dots,k$. It is then immediate that if $(S,u)$ is conjugate to an element of $\fH\times H$ ($H=\exp\fH)$, then the corresponding rational map satisfies the 
conditions in the statement.
\end{proof}

In \cite[Ex. 5.4]{slices} we have identified the complex manifold described in the above proposition for $k=2n$ as the {\em Hilbert scheme of $n$ points on the $D_1$-surface $x^2-zy^2=1$ transverse to the projection $(x,y,z)\mapsto z$} (similarly, the space of all rational maps of degree $k$ is the Hilbert scheme of $k$ points on $\cx^\ast\times \cx$ transverse to the projection onto the second factor \cite[Ch. 6]{AH}).
It turns out that for odd $k$ the complex structure of $N_k$ is that of the transverse Hilbert scheme of points on the $D_0$-surface $x^2-zy^2-y=0$:
\begin{proposition}\begin{itemize}
\item[(i)] With respect to any complex structure $N_{2n}$ is biholomorphic to the Hilbert scheme of $n$ points  on the $D_1$-surface $x^2-zy^2=1$ transverse to the projection $(x,y,z)\mapsto z$.
\item[(ii)] With respect to any complex structure $N_{2n+1}$ is biholomorphic to the Hilbert scheme of $n$ points  on the $D_0$-surface $x^2-zy^2-y=0$ transverse to the projection $(x,y,z)\mapsto z$.
\end{itemize}
\end{proposition}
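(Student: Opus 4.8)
The plan is to read both statements off Proposition~\ref{hol}, that is, off the description of $N_k$ as a space of based rational maps, by splitting the numerator $p$ of a rational map $p(z)/q(z)$ into its parts even and odd under $z\mapsto -z$. For $k=2n$ the comparison is exactly \cite[Ex.~5.4]{slices}, so the real work is the odd case; I will nevertheless run the even case first, as it is the template.

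Fix a complex structure, take $p(z)/q(z)$ as in Proposition~\ref{hol}, and write $p(z)=a(z^2)+z\,b(z^2)$, so that $p(z)p(-z)=a(z^2)^2-z^2 b(z^2)^2$. When $k=2n$ one has $q(z)=\tilde q(z^2)$ and $\deg a,\deg b\le n-1$; since $a(z^2)^2-z^2b(z^2)^2-1$ is a polynomial in $z^2$, the congruence $p(z)p(-z)\equiv 1\pmod{q(z)}$ is equivalent to $a(u)^2-u\,b(u)^2\equiv 1\pmod{\tilde q(u)}$. The triple $(c,X,Y):=(\tilde q,a,b)$ is then precisely the coordinate data, in the sense of \cite[\S 5]{slices}, of a length-$n$ subscheme of the $D_1$-surface $x^2-zy^2=1$ transverse to $(x,y,z)\mapsto z$, and conversely; the correspondence is polynomial in the coefficients both ways, and $\gcd(p,q)=1$ holds automatically since $p(-z)$ reduced modulo $q(z)$ inverts $p$. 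With Proposition~\ref{hol} this gives~(i).

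For $k=2n+1$ one has $q(z)=z\tilde q(z^2)$, $p(0)=1$, $\deg a\le n$ with $a(0)=1$, and $\deg b\le n-1$. The substitution to use is $a(u)=1+2u\,Y(u)$, $b(u)=2X(u)$, $c=\tilde q$, so that $\deg X,\deg Y\le n-1$, and the key computation is the identity
\[p(z)p(-z)-1=-4z^2\bigl(X(z^2)^2-z^2 Y(z^2)^2-Y(z^2)\bigr).\]
Since $c(z^2)$ and $X(z^2)^2-z^2Y(z^2)^2-Y(z^2)$ are even while $z$ is odd, $z\,c(z^2)$ divides the left-hand side if and only if $c(u)$ divides $X(u)^2-u\,Y(u)^2-Y(u)$, the condition $p(0)=1$ being absorbed into $a(0)=1$. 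Hence, through $(c,X,Y)$, the map $p/q$ corresponds to a length-$n$ subscheme of the $D_0$-surface $x^2-zy^2-y=0$ transverse to $(x,y,z)\mapsto z$, and conversely; the correspondence is a bijection given by polynomial formulas each way (invertibility of $p$ modulo $q$ being again automatic), hence an isomorphism of affine varieties, in particular a biholomorphism. With Proposition~\ref{hol} this proves~(ii); note $\dim=3n-n=2n=\tfrac12\dim_{\oR}N_{2n+1}$, as it must be.

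The one genuinely non-formal step is the odd-case substitution $a(u)=1+2u\,Y(u)$, $b(u)=2X(u)$: it is this precise rescaling, together with the normalisation $a(0)=1$ forced by $p(0)=1$, that turns $p(z)p(-z)\equiv 1$ into the $D_0$-relation $X^2-zY^2-Y\equiv 0$, and pinning it down is the point. Everything else is bookkeeping; in particular the parity argument above uses no genericity, so the identification is scheme-theoretic and holds over all of $N_k$, including configurations with coincident poles or a double pole at the origin, while the smoothness and expected dimension of the target are part of the transverse Hilbert scheme formalism of \cite[\S 5]{slices}.
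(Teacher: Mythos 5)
Your proof is correct and follows essentially the same route as the paper: part (i) is delegated to \cite[Ex.~5.4]{slices}, and part (ii) rests on the substitution $z=u^2$ together with the decomposition of $p$ into parts even and odd under $z\mapsto -z$, which turns $p(z)p(-z)-1$ into ($-4z^2$ times) the $D_0$-relation. The one place where you genuinely tighten the argument is the final step: the paper verifies the equivalence of the two conditions only where the roots of $q$ are distinct and then appeals to two closed affine subvarieties sharing a dense open subset, whereas your parity argument (namely $zc(z^2)\mid z^2E(z^2)$ iff $c(u)\mid E(u)$, because $E(z^2)$ and $c(z^2)$ are even in $z$) gives the equivalence of the divisibility conditions identically, with explicit mutually inverse polynomial maps, so no genericity or closure argument is needed. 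Your normalisation $a(u)=1+2uY(u)$, $b(u)=2X(u)$ also supplies the constants the paper elides: its own substitution $p=1+uf$, $f(u)=x(u^2)+uy(u^2)$ actually yields $x^2-zy^2-2y$, which matches the stated $D_0$-equation only after exactly the rescaling you write down.
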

\begin{remark} The fact that $N_3$ is biholomorphic to the $D_0$-surface has been observed by Houghton and Sutcliffe \cite{HS}.
\end{remark}
\begin{proof} Part (i) has already been shown in  \cite[Ex. 5.4]{slices}. For part (ii) recall from \cite{slices} that the Hilbert scheme of $n$ points  on the $D_0$-surface $x^2-zy^2+y=0$ transverse to the projection $(x,y,z)\mapsto z$ is an affine variety in $\cx^{3n}$ given by the same equation, but for polynomials. More precisely its points are polynomials $x(z),y(z),r(z)$ with degrees of $x$ and $y$ at most $n-1$ and $r(z)$ a monic polynomial of degree $n$, satisfying the condition
$$ x(z)^2-zy(z)^2-y(z)=0\mod r(z).$$
As in \cite{slices} write $z=u^2$ and rewrite the above equation as
$$ \bigl(x(u^2)+uy(u^2)\bigr)\bigl(x(u^2)-uy(u^2)\bigr)- \frac{ \bigl(x(u^2)+uy(u^2)\bigr)-\bigl(x(u^2)-uy(u^2)\bigr)}{u}=0\mod r(u^2).$$
%$$ uf(u)f(-u)+f(u)-f(-u)=0\mod ur(u^2).$$
Let us write $f(u)=x(u^2)+uy(u^2)$, $p(u)=1+uf(u)$, $q(u)=ur(u^2)$. Then it is easy to see that if the roots of $q$ are distinct, then the last equation is equivalent to the condition $p(w)p(-w)=1$ for any nonzero root $w$ of $q(u)$.  Since this last equation is polynomial in the coefficients of $p$ and $q$, it describes a closed affine subvariety inside the affine variety of all rational maps. Since the two closed affine subvarieties, namely the transverse Hilbert scheme of points on the $D_0$-surface and the variety described in Proposition \ref{hol}, have a common open dense subset, they must coincide.
\end{proof}

We can now compute the fundamental group of $N_k$:
\begin{proposition} $$\pi_1(N_k)=\begin{cases} 1 & \text{if $k$ is even,}\\ \oZ_2 & \text{if $k$ is odd}.\end{cases}$$
\label{pi1}\end{proposition}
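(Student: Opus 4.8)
The plan is to exploit the biholomorphisms of the preceding Proposition: it is enough to compute $\pi_1$ of the transverse Hilbert scheme of $n$ points on the $D_1$-surface $\{x^2-zy^2=1\}$ (when $k=2n$) and on the $D_0$-surface $\{x^2-zy^2-y=0\}$ (when $k=2n+1$), for the projection $\pi(x,y,z)=z$. The answer will be $H_1$ of the surface in question, namely $0$ for the $D_1$-surface and $\oZ_2$ for the $D_0$-surface; and the whole even/odd dichotomy will be seen to come from the fibre over $0$, which is the connected parabola $\{y=x^2\}$ on the $D_0$-surface but the disjoint union of the two lines $\{x=\pm1\}$ on the $D_1$-surface.

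I would first treat $n=1$, i.e.\ the surfaces themselves. Each surface $S$ is smooth, and $S\cap\{y\neq0\}$ maps isomorphically onto $\cx_x\times\cx^\ast_y$, hence is homotopy equivalent to a circle; let $\mu$ generate its fundamental group. Writing $S$ as the union of $S\cap\{y\neq0\}$ with a tubular neighbourhood of $S\cap\{y=0\}$, van Kampen gives $\pi_1(S)$. On the $D_1$-surface $S\cap\{y=0\}$ is the disjoint union of the two reduced lines $\{x=\pm1\}$; each carries a contractible neighbourhood whose intersection with $\{y\neq0\}$ has fundamental group mapping isomorphically onto $\langle\mu\rangle$, so gluing in either neighbourhood already kills $\mu$ and $\pi_1=1$. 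On the $D_0$-surface $S\cap\{y=0\}$ is the single line $\{x=0\}$, but the restriction of $y$ to $S$ vanishes there to second order (one has $y=x^2-zy^2$ on $S$), so the boundary loop of a contractible neighbourhood of that line maps to $\mu^2$, giving $\pi_1=\oZ_2$. In particular $H_1$ of the two surfaces is $0$ and $\oZ_2$ respectively.

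For general $n$ the plan is to compare $\operatorname{Hilb}^n_{\mathrm{tr}}(S)$ with its open subset $U$ of reduced configurations whose $n$ points have pairwise distinct, nonzero $\pi$-values. The restriction of $\pi$ makes $U$ a locally trivial fibre bundle over the configuration space $C_n(\cx^\ast)$ of $n$ unordered distinct points of $\cx^\ast$, with fibre $(\cx^\ast)^n$ (the values of the numerator $p$ at a chosen square root of each pole); since configuration spaces of surfaces are aspherical, $\pi_1(U)$ is an extension of $\pi_1(C_n(\cx^\ast))$ by $\oZ^n$. The complement $\operatorname{Hilb}^n_{\mathrm{tr}}(S)\setminus U$ is a divisor, and its components are (a) the preimage, under the natural map $\operatorname{Hilb}^n_{\mathrm{tr}}(S)\to\operatorname{Sym}^n\cx$ sending a subscheme to its $\pi$-image divisor, of the discriminant, and (b) one locus for each connected component of $\pi^{-1}(0)$, consisting of the configurations with a point on that component. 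Thus $\pi_1(\operatorname{Hilb}^n_{\mathrm{tr}}(S))$ is $\pi_1(U)$ modulo the normal closure of the meridians of these components. Killing the meridians of type (a) identifies the $n$ generators of $\oZ^n$ to a single class $\lambda$ and trivialises the braid part of $\pi_1(C_n(\cx^\ast))$, so only $\lambda$ survives; killing a meridian of type (b) — which projects in the base to the loop in which one pole encircles $0$, and which therefore conjugates $\lambda$ to $\lambda^{-1}$ because such a loop swaps the two square roots $\pm w$ — imposes $\lambda^2=1$. When $\pi^{-1}(0)$ is connected (the $D_0$-case) this is all, and $\pi_1=\oZ_2$; when $\pi^{-1}(0)$ has two components (the $D_1$-case) the ratio of the two corresponding meridians lies in the fibre subgroup and, modulo the type (a) relations, equals $\lambda^{\pm1}$, so $\lambda$ dies and $\pi_1=1$. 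A parallel but easier computation with abelianisations gives $H_1(\operatorname{Hilb}^n_{\mathrm{tr}}(S))=0$ when $k$ is even and $\oZ_2$ when $k$ is odd, ruling out $\pi_1=1$ in the odd case.

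The main obstacle is this last step. One has to pin down, as elements of $\pi_1(U)$, the meridians of the boundary divisors — a neighbourhood of a type (b) component is naturally described in the ``resolved'' coordinates near $\pi^{-1}(0)$ (e.g.\ $(y,z)$ near $\{x=1\}$ on the $D_1$-surface) rather than in the fibre coordinate of $U$ — and then verify that on the $D_1$-surface the two meridians lying over $z=0$ really differ by $\lambda^{\pm1}$ once the type (a) relations are imposed. An alternative that avoids the explicit fibre bundle is to prove the general fact $\pi_1(\operatorname{Hilb}^n_{\mathrm{tr}}(S))\cong H_1(S;\oZ)$ for $n\geq1$ by comparison with $\operatorname{Sym}^n(S)$ (whose fundamental group is classically $H_1(S;\oZ)$ for $n\geq2$), since both contain $\operatorname{Sym}^n_\circ(S)$ as a common smooth dense open subset; but checking that the added $\pi_1$-relations agree on the two sides runs into essentially the same point.
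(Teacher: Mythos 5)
Your reduction to the transverse Hilbert schemes of the $D_0$- and $D_1$-surfaces, and your $n=1$ computation (in particular the observation that $y$ vanishes to order $2$ along $\pi^{-1}(0)$ on the $D_0$-surface but to order $1$ along each of the two lines on the $D_1$-surface), are correct and agree with the paper's starting point. For $n\geq 2$, however, your braid-theoretic route has a genuine gap, and it sits exactly where the even/odd dichotomy is decided. First, after killing the type (a) meridians the surviving group is not just $\langle\lambda\rangle$: the class $\tau$ of a loop in which one pole encircles $0$ also survives (it generates $\pi_1$ of the space of monic polynomials with nonzero distinct-or-not roots), and what remains is the group $\langle\lambda,\tau\mid\tau\lambda\tau^{-1}=\lambda^{-1}\rangle$. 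Each type (b) meridian then has the form $\tau\lambda^{a}$ for an integer $a$ depending on the component of $\pi^{-1}(0)$ and on the connecting path, and the final answer is $1$ or $\oZ_2$ according to whether, on the $D_1$-side, the two exponents differ by an odd or an even integer. You explicitly flag that you have not verified this parity; as written the argument therefore cannot distinguish the two cases. (The claim is true: transporting the meridian based near $p=-1$ to the basepoint near $p=1$ along a semicircle $\gamma$, the discrepancy is the loop $\gamma$ followed by the reverse of its monodromy image $p\mapsto \gamma(p)^{-1}$, which has winding number $\pm1$ about $0$; but this must be in the proof.) The same unresolved parity undermines the $H_1$ computation you offer as a safeguard in the odd case, since it requires the same exponent.

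For comparison, the paper avoids all meridian bookkeeping in $\pi_1(U)$. It quotes Dold--Puppe ($\pi_1(S^nX)\simeq H_1(X,\oZ)$ for $n\geq2$) and Koll\'ar ($\pi_1(X^{[n]})\simeq\pi_1(S^nX)$) to get $\pi_1(X^{[n]})\simeq H_1(X,\oZ)$, and then proves $\pi_1(X^{[n]}_\pi)\simeq\pi_1(X^{[n]})$ by a single van Kampen argument: the only codimension-one locus of $X^{[n]}$ absent from $X^{[n]}_\pi$ is the divisor where two distinct points lie in the same fibre of $\pi$, and its meridian is null-homotopic in $X^{[n]}_\pi$ because the two points can be slid together horizontally into a transverse double point. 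This is essentially the ``alternative'' you mention at the end, except that the comparison of relations you worry about reduces to this one explicit contraction. Your approach is more self-contained and would work, but to complete it you must (i) correct the intermediate claim about what survives after the type (a) relations, and (ii) actually compute the fibre components of the two type (b) meridians on the $D_1$-surface and show their ratio is an odd power of $\lambda$.
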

\begin{proof} The fundamental groups of the $D_0$- and $D_1$-surface (i.e. the Atiyah-Hitchin manifold and its double cover) are well-known \cite{AH} and equal to $\oZ_2$ and to $1$ respectively. The result follows from
\begin{lemma} Let $X$ be a smooth complex surface and $\pi:X\to C$ a holomorphic submersion onto a connected Riemann surface $C$. Suppose further that, over an open dense subset of $C$, $\pi$ is a locally trivial fibration with connected fibres. Then the fundamental group of the transverse Hilbert scheme $X^{[n]}_\pi$ of $n$ points, $n\geq 2$, is equal to $H_1(X,\oZ)$.
\end{lemma}
We first observe that if $X$ is a smooth complex surface and $n\geq 2$, then the fundamental group of the full Hilbert scheme $X^{[n]}$ of $n$ points, $n\geq 2$, is equal to $H_1(X,\oZ)$. This follows by combining two facts: 1) a classical result of Dold and Puppe \cite[Thm. 12.15]{DP} which says that $\pi_1(S^n X)\simeq H_1(X,\oZ)$ for $n\geq 2$, and 2) a theorem of Koll\'ar \cite[Thm. 7.8]{Koll} which implies that $\pi_1(X^{[n]})\simeq\pi_1(S^n X)$. 
\par
We now aim to show that $\pi_1\bigl(X^{[n]}_\pi\bigr)\simeq\pi_1\bigl(X^{[n]}\bigr)$. Let $Y$ denote a submanifold of $X^{[n]}$ consisting of $D\in X^{[n]}$ with $\pi(D)=z_0+ E$, where $E$ has length $n-2$ in $C$, $z_0\not\in \supp E$, and $D\cap \pi^{-1}(z_0)$ consists of two distinct points. Clearly $Y\cap X^{[n]}_\pi=\emptyset$. Denote by $U$ a tubular neigbourhood of $Y$ and let $Z=U\cup X^{[n]}_\pi$, $W=U\cap X^{[n]}_\pi$. Since the complement  of $Z$ in $X^{[n]}$ has (complex) codimension $2$, the fundamental groups of  $Z$ and of $X^{[n]}$ coincide. Since $W$ is a punctured disc bundle over $Y$, the long exact sequence of homotopy groups implies that the map $\pi_1(W)\to\pi_1(U)$ is surjective and its kernel consists of at most the ``meridian loop"  around $Y$. Owing to the assumption, we can choose a point $y$ of $Y$ so that a neighbourhood of the fibre containing two points is isomorphic to $B\times F$, where $B$ is a disc $\{z\in\cx; |z|< 1+\epsilon\}$ and $F$ is the generic fibre of $\pi$. The intersection of a neighbourhood of $y$ in $Z$ with $W$  is then of the form $S^2_\pi(B\times F)\times V$, where
the subscript $\pi$ means that the pair of points $\{x_1,x_2\}\in B\times F$ satisfies $\pi(x_1)\neq \pi(x_2)$ and  $V$ is an open subset in $X^{[n-2]}_\pi$. Let $\rho$ be the projection $(B\times F)\times (B\times F)\to S^2(B\times F)$.
A ``meridian loop" in $W$ around $Y$ can be then chosen to be 
$$t\mapsto \bigl(\rho(e^{\pi it},f,e^{-\pi i t},f),v\bigr),\enskip t\in[0,1]$$
for constant $f$ and $v$. This loop is contractible in $X^{[n]}_\pi$: the homotopy
$$ H(r,t)= \bigl(\rho(re^{\pi it},f,re^{-\pi i t},f),v\bigr),\enskip t,r\in[0,1]$$
contracts it to $(D,v)$ where $D$ is the double point $(\{z^2=0\},f)$ in $B\times F$. 
To recapitulate: we have shown that the map $\pi_1(W)\to \pi_1(U)$ is surjective and its kernel has trivial image in $\pi_1(X^{[n]}_\pi)$. It  follows that the amalgamated free product     
$\pi_1(U)\ast_{\pi_1(W)}\pi_1(X^{[n]}_\pi)$ is isomorphic to $\pi_1(X^{[n]}_\pi)$ and, hence,  van Kampen's theorem implies that $\pi_1(Z)\simeq \pi_1(X^{[n]}_\pi)$.
\end{proof}

\begin{remark} The $D_0$-surface $X$ is the quotient of the $D_1$-surface $\tilde X$ by a free action of $\oZ_2$ given by $(x,y,z)\mapsto (-x,-y,z)$. This induces a free $\oZ_2$-action on the transverse Hilbert scheme $\tilde X_\pi^{[n]}$ of $n$ points for any $n$, but $\tilde X_\pi^{[n]}/\oZ_2\not\simeq X_\pi^{[n]}$, unless $n=1$. Certainly, there is a surjective holomorphic map $\tilde X_\pi^{[n]}\to X_\pi^{[n]}$ for any $n$, which, in the description of these spaces provided in Proposition \ref{hol}, sends a rational function $p(z)/q(z)\in \tilde X_\pi^{[n]}$ to $\bar{p}(z)/zq(z)$, where $\bar{p}(z)=p(z)^2\mod zq(z)$. This map is constant on $\oZ_2$-orbits, but it is
%\begin{equation} \rho:\bigl(x(z),y(z),q(z)\bigr)\mapsto \bigl( \end{equation}
generically $2^n$-to-$1$ and not a covering (the preimage of a point consists of $2^m$ points, where $2m$ is the number of distint roots of $q(z)$).
\label{Z2}\end{remark}
%%%%%%%%%%%%%%
\begin{remark} It is instructive to compare spectral curves of monopoles in $N_{2n+1}$ to those in $N_{2n}$. It follows from Proposition \ref{hol} that the spectral curve $S$ of a monopole in $N_{2n+1}$ is always singular and given by an equation of the form $\eta P(\zeta,\eta)=0$, where $\zeta$ is the affine coordinate of $\oP^1$, $\eta$ is the induced fibre coordinate in $T\oP^1$, and $P$ a  polynomial of the form $\eta^{2n}+\sum_{i=1}^{n}a_i(\zeta)\eta^{2n-2i}$, $\deg a_i(\zeta)=4i$. A spectral curve of a monopole satisfies \cite{AH} the condition $L^2|_S\simeq \sO$, where $L^2$ is a line bundle on $T\oP^1$ with transition function $\exp(2\eta/\zeta)$. It follows that the line bundle $L^2$ is also trivial on the curve $\tilde S$ defined by the equation $P(\zeta,\eta)=0$. Conversely, the spectral curve $\tilde S$ of a monopole in $N_{2n}$ admits a section $s(\zeta,\eta)$ of $L^2$ which satisfies $s(\zeta,\eta)s(\zeta,-\eta)\equiv 1\mod P(\zeta,\eta)$, where $P(\zeta,\eta)=0$ is the equation of $\tilde S$. It follows that $s(\zeta,0)=\pm 1$ and if we set $\bar{s}(\zeta,\eta)=s^2(\zeta,\eta)$ on $\tilde S$ and $\bar{s}(\zeta,\eta)\equiv 1$ on $\eta=0$, we obtain a nonvanishing section of $L^4$ on the curve $\eta P(\eta,\zeta)=0$, i.e. a section of $L^2$ on the curve $S$ given by $\tilde \eta P(\tilde\eta/2,\zeta)=0$, where $\tilde\eta=2\eta$. In the case $n=1$, Houghton and Sutcliffe \cite{HS} have shown that for $n=1$ these maps $S\mapsto \tilde S$ and $\tilde S\mapsto S$ send spectral curves of monopoles  in $N_{3}$ to spectral curves of monopoles in $N_{2}$ and vice versa\footnote{Strictly speaking, the curve $\tilde S$ obtained from $S$ must be rescaled via $\eta=2\tilde\eta$ in order to be the spectral curve of a monopole in $N_{2}$.}, but for higher $n$ this is not  the case. The reason is that Hitchin's \cite{Hit} nonsingularity condition $H^0(S,L^t(k-2))=0$, $t\in (0,2)$, is not necessarily satisfied for the resulting curves.
\end{remark}

\section{Deformations and coverings\label{2}}

Dancer \cite{Dan} has shown  that the $D_1$-surface admits a $1$-parameter family of deformations carrying complete hyperk\"ahler metrics. As we observed in \cite{slices}, the transverse Hilbert schemes of points on these deformations also admit natural complete hyperk\"ahler metrics. We wish to describe these metrics as deformations of manifolds $N_{2n}$. We begin by describing $N_k$ without reference to an embedding into $M_k^0$.
\par
Let $G_k$ (resp. $\g_k$) denote $Sp(n)$ (resp. $\fS\fP(n)$) if $k=2n$ and $SO(2n+1)$ (resp. $\fS\fO(k)$) if $k=2n+1$.
The construction of the previous section shows that $N_k$ is the moduli space of $\fG_k$-valued solutions to Nahm's equations on $(0,2)$ with simple poles at $t=0,2$ and residues defining the principal homomorphism $\su(2)\to \g_k$, modulo $G_k$-valued gauge transformations which are identity at $t=0,2$. This moduli space is, in turn, a finite-dimensional hyperk\"ahler quotient of a simpler hyperk\"ahler manifold.
Let $W_k^-$ (resp. $W_k^+$)  be the moduli space of  $\fG_k$-valued solutions to Nahm's equations on $(0,1]$ (resp. $[1,2)$) with the above boundary behaviour at $t=0$ (resp. at $t=2$) and regular at $t=1$, modulo $G_k$-valued gauge transformations which are identity at $t=0,1$ (resp. at $t=1,2)$. $W_k^\pm$ are hyperk\"ahler manifolds (biholomorphic to $G_k^\cx\times \cx^n$ \cite{JLMS}) with an isometric and triholomorphic action of $G_k$ obtained by allowing gauge transformations with an arbitrary value at $t=1$. Then $N_k$ is the hyperk\"ahler quotient of $W_k^-\times W_k^+$ by the diagonal $G_k$.
\par
We can also describe in a similar manner the universal (i.e. double) covering space of $N_{2n+1}$: it is given by the same construction, but with $G_{2n+1}=\text{\it Spin}(2n+1)$ instead of $SO(2n+1)$. 
\par
An alternative construction of the $N_k$ proceeds as follows. Let $G$ denote one of the groups $Sp(n)$, $SO(2n+1)$, or $\text{\it Spin}(2n+1)$ and let $\tau$ be an automorphic involution on $G$ with fixed point set $K$. Consider the hyperk\"ahler quotient $Y^-$ of $W_k^-$ by $K$ (with zero-level set of the moment map). Let $(T_0,T_1,T_2,T_3)$ be  a solution to Nahm's equations corresponding to a point in $Y^-$. Modulo gauge transformations we can assume that $T_0(1)=0$. We can then extend this solution to a solution to Nahm's equations on $(0,2)$ by setting 
\begin{equation} T_i(2-t)=-\tau\bigl(T_i(t)),\enskip i=0,1,2,3.\label{tau2}\end{equation}
This solution has the boundary behaviour of a solution in $N_k$ and we can describe the moduli space $Y$ of such extended solutions as the space of solutions on $(0,2)$ having the correct poles and residues at $t=0,2$ and satisfying \eqref{tau2}, modulo $G$-valued gauge transformations $g(t)$ such that
\begin{equation} g(0)=g(2)=1,\quad g(2-t)=\tau(g(t)), \enskip t\in [0,2].\end{equation}
The map $Y^-\to Y$ is a triholomorphic homothety with factor $2$.  For dimensional reasons $Y^-$ (and consequently $Y$) is empty unless $\fK=\fU(n)$ for $k=2n$ or $\fK=\fS\fO(n)\oplus \fS\fO(n+1)$ for $k=2n+1$. Thus there are the following three possibilities for the symmetric pair $(G,K)$:
\begin{itemize}
\item[(i)] $k=2n$, $G=Sp(n)$ and $K=U(n)$;
\item[(ii)] $k=2n+1$, $G=SO(2n+1)$ and $K=S\bigl(O(n)\times O(n+1))$;
\item[(iii)] $k=2n+1$, $G=\text{\it Spin}(2n+1)$ and $K$ is the diagonal double cover of $SO(n)\times SO(n+1)$ (i.e. $K=\text{\it Spin}(n)\times \text{\it Spin}(n+1)/\{(1,1),(-1,-1)\}$).
\end{itemize}
An easy computation shows that in each case $\dim Y=\dim N_k$. Moreover, the natural map $Y\to N_k$ is an isometric (and triholomorphic) immersion, and since both $N_k$ and $Y$ are complete (\cite[Thm. A.1]{slices}), this map must be a covering. Thus it follows from Proposition \ref{pi1} that $Y$ is isometric to $N_k$ in cases (i) and (ii), while in case (iii) $Y$ is the universal cover of $N_{2n+1}$.
\par
The above construction allows us easily to describe a family of deformations of $N_{2n}$. Indeed, the
Lie algebra $\fK=\fU(n)$ has a nontrivial centre and, therefore, we can take hyperk\"ahler quotients of $W_{2n}^-$ by $K$ at nonzero level sets of the hyperk\"ahler moment map. This produces a $3$-parameter family of hyperk\"ahler deformations of $N_{2n}$. Arguments analogous to those in \cite{slices} show that these are the natural hyperk\"ahler metrics on the transverse Hilbert schemes of points on Dancer's deformations of the $D_1$-surface. 

\begin{remark} As already mentioned in Remark \ref{Z2}, $N_{2n}$ admits a free action of $\oZ_2$ for any $n$. This action is also isometric and triholomorphic and, hence, $N_{2n}/\oZ_2$ is a hyperk\"ahler manifold. This manifold can be described in the same way as $N_{2n}$ but with $G_{2n}=\oP Sp(n)$ rather than $Sp(n)$.  
\end{remark}

\bigskip

\end{document}